\documentclass[reqno]{amsart}
\usepackage[english]{babel}
\usepackage[latin1]{inputenc}
\usepackage{amsfonts}
\usepackage{amssymb}
\usepackage{amsmath}
\usepackage{caption}
\usepackage{subcaption}
\usepackage{siunitx}
\usepackage{amsthm}
\usepackage{graphicx}
\usepackage{booktabs} 
\usepackage{xcolor}
\usepackage{epsfig}
\usepackage{fancyhdr}
\usepackage{tikz}
\usetikzlibrary{arrows.meta, positioning}

\usepackage[all]{xy}

\usepackage{hyperref}
\usepackage{amsmath,amssymb}
\usepackage[bbgreekl]{mathbbol}

\numberwithin{equation}{section}

\newcommand{\R}{\mathbb{R}}

\newtheorem{theorem}{Theorem}[section]
\newtheorem{lemma}[theorem]{Lemma}

\newtheorem{defi}[theorem]{Definition}

\usepackage{amsmath}
\title{Adaptive Filtering via Canonical Systems with Time-Varying  Hamiltonians}

\author[Acharya]{Keshav Raj Acharya}
\address{Department of Mathematics,
Embry-Riddle Aeronautical University,
1 Aerospace Blvd., Daytona Beach, FL 32114, U.S.A.}
\email{acharyak@erau.edu}

\author[Acharya]{Pitambar Acharya}
\address{Department of Mathematics,  
  University Of Alabama at
 Birhimgham, 1402 10th Avenue South, Birmingham, AL 35294-1241, USA}
\email{pacharya@uab.edu}

\date{\today}

\begin{document}

\maketitle

\begin{abstract}
In many practical applications, signals and environments are time-varying, which makes fixed filters unreliable. Adaptive filtering, on the other hand, updates in real time to suppress noise, track nonstationary signals, and identify unknown systems. This paper investigates an adaptive filtering framework based on canonical systems with time-varying symmetric positive semidefinite Hamiltonian matrices.  The proposed method adapts the Hamiltonian matrix using a gradient-based scheme designed to minimize the squared error between the system output and a desired reference signal. We establish theoretical stability guarantees via Lyapunov analysis, ensuring boundedness of system trajectories and convergence of the error signal under suitable assumptions. Furthermore, we present numerical integration schemes preserving the underlying Hamiltonian structure and projective techniques to maintain positive semidefiniteness of the Hamiltonian matrix. Extensive simulations on synthetic nonstationary signals illustrate the effectiveness and robustness of the proposed adaptive filter.\\

\textbf{Keywords:} Adaptive Filtering, Canonical Systems,  Hamiltonian, Positive Semidefinite, Stability, Finite-Difference Approximation 

\end{abstract}

\section{Introduction}

Adaptive filtering is a fundamental tool in modern signal processing, particularly suited for applications where the statistical properties of signals and noise vary over time, see \cite{ figueroa2004simplicial,Haykin2014,narendra2012stable}. Traditional linear time-invariant (LTI) filters often perform poorly in such nonstationary environments. Thus, adaptive filters that dynamically adjust parameters based on observed signals have been extensively studied, see \cite{Haykin2014} and references therein.  Such time-varying or  nonstationary environments  are common in communications, radar, biomedical signal processing, and control systems. In these scenarios, fixed-parameter filters often fail to track the changes in signal or noise characteristics, leading to suboptimal performance, see \cite{WidrowStearns1985}.
Traditional filters are often designed assuming  LTI systems, characterized by a fixed impulse response \( h(t) \) or fixed filter coefficients \(\mathbf{w}\). The output \( y(t) \) of an LTI filter responding to an input signal \( x(t) \) is
\begin{equation}
y(t) = (h * x)(t) = \int_{-\infty}^{\infty} h(\tau) x(t - \tau) \, d\tau,
\end{equation}
or in discrete-time form,
\begin{equation}
y[n] = \sum_{k=0}^{M-1} w_k x[n - k],
\end{equation}
where \(\mathbf{w} = [w_0, w_1, \ldots, w_{M-1}]^T\) is the fixed filter coefficient vector.

However, in  nonstationary environments, where the statistical properties of \( x(t) \) or noise \( v(t) \) change over time, fixed LTI filters cannot adapt, resulting in degraded filtering performance such as increased mean squared error or reduced signal-to-noise ratio. To overcome this,  adaptive filters dynamically update their parameters in response to incoming data. Formally, an adaptive filter adjusts the coefficient vector \(\mathbf{w}(t)\) based on the observed input-output data to optimize a performance criterion, commonly the  mean squared error  (MSE) between the filter output \( y(t) \) and a desired reference signal \( d(t) \)
\begin{equation}
J(t) = \mathbb{E} \left[ \left( d(t) - y(t) \right)^2 \right].
\end{equation}
The goal is to find
\begin{equation}
\mathbf{w}^*(t) = \arg \min_{\mathbf{w}(t)} J(t),
\end{equation}
where the optimal weights \(\mathbf{w}^*(t)\) may vary with time.

Adaptive filters operate by iterative update laws, typically based on stochastic gradient descent or recursive least squares principles. This is known as the Least Mean Squares (LMS) algorithm. The LMS algorithm updates the filter coefficients by moving opposite to the gradient of the instantaneous squared error
\begin{equation}
\mathbf{w}(n+1) = \mathbf{w}(n) + \mu e(n) \mathbf{x}(n),
\end{equation}
where \( e(n) = d(n) - \mathbf{w}^T(n) \mathbf{x}(n) \) is the instantaneous error,
\( \mathbf{x}(n) = [x(n), x(n-1), \ldots, x(n - M + 1)]^T \) is the input vector,
 \( \mu \) is the step size controlling adaptation speed. The Recursive Least Squares (RLS) algorithm minimizes a weighted least squares cost function:
\begin{equation}
J(n) = \sum_{k=0}^n \lambda^{n-k} \left( d(k) - \mathbf{w}^T(n) \mathbf{x}(k) \right)^2,
\end{equation}
where \( \lambda \in (0,1] \) is a forgetting factor that gives exponentially less weight to older data. RLS algorithms provide faster convergence at the cost of increased computational complexity. Although LMS and RLS algorithms provide efficient parameter adaptation in many applications, their underlying system models are linear and time-invariant (aside from coefficient changes). They may not fully capture complex time-varying dynamics or intrinsic system structures, especially when the system's state evolves on manifolds or has additional constraints (e.g., positive semidefiniteness, symplecticity).

\subsection{Structured Adaptive Filtering via Canonical Systems}

To overcome these limitations, we want to developed adaptive filtering methods that use canonical systems whose Hamiltonian matrices can change over time. This allows the filter to adjust more flexibly as the signal evolves.

In adaptive filtering, canonical systems provide a way to describe how the filter's internal state evolves over time. They use special Hamiltonian-based rules and are expressed as a set of first-order differential equations of the form:
\begin{equation}\label{ca}
J  y'(t) = z H(t) y(t), \quad t\in [0, \infty)
\end{equation}
where   \( y : [0, \infty) \rightarrow y(t, z) \in \mathbb{C}^2 \) is the filter state,
   \( J = \begin{pmatrix} 0 & 1 \\ -1 & 0 \end{pmatrix} \) is the canonical symplectic matrix,
   \( H(t)\) is a time-varying positive semidefinite Hamiltonian matrix whose entries are locally integrable, and 
 \( z \in \mathbb{C} \) is a spectral parameter.
The Hamiltonian matrix $H(t)$ acts like a record of the system's energy and helps determine how stable the system is and how it behaves over time.

Canonical systems, originally introduced by de Branges in the context of spectral theory \cite{debranges1968canonical}. These systems are important tools in the spectral theory and have been well studied in connection to the spectral theory connecting the theory from Storm-Liouvill equation, Schr\"odinger equation, String equation, Dirac equations and Jacobi equations, please see \cite {Acharya2016, Remling2018, Sakhnovich1999} and references therein. They also appear in several applications  in physics and many other areasa, please see \cite{LuduAcharya2021}.

In this work, we employ the canonical systems framework to design an adaptive filtering scheme in which the filter parameters are embedded in the time-varying Hamiltonian matrix \(H(t)\).  Unlike classical adaptive filters such as LMS and RLS, which adjust unconstrained filter coefficients, the proposed method adapts the underlying system dynamics through a structured Hamiltonian framework. The key novelty lies in casting adaptive filtering as a Hamiltonian learning problem, where the Hamiltonian matrix is updated using a gradient-based rule while maintaining its fundamental structural properties. 
This structure-preserving adaptation allows the model to represent time-varying behavior more naturally and provides inherent stability and physical consistency that are not explicitly addressed in conventional adaptive filtering approaches.

This paper is organized as follows.
Section 2 introduces an adaptive filtering framework built on canonical systems with time-dependent, positive semidefinite Hamiltonians.  We explicitly highlight how the proposed framework departs from standard adaptive filtering formulations by embedding learning within a constrained Hamiltonian structure. We derive a gradient-based update rule for the Hamiltonian matrix that preserves both symmetry and positive semidefiniteness. Section 3 establishes stability guarantees using Lyapunov energy methods, ensuring bounded trajectories and convergence of the estimation error.
In Section 4, we develop numerical schemes that preserve the canonical system structure, incorporate projection methods to enforce Hamiltonian constraints, and present simulation results demonstrating effective adaptive noise filtering on synthetic nonstationary signals.

\section{Model Formulation: Canonical-System Adaptive Filter Model }
This section is devoted to give canonical system formulation for adaptive filter. Such formulation for other physical systems can be found in \cite{ortega2023learnability,WidrowStearns1985}. In the canonical system \eqref{ca}, the  
 complex number $z \in \mathbb{C}$ is a spectral parameter used to study the system's eigenmodes. 
Although $z$ may be complex, the Hamiltonian $H(t)$ itself is real, symmetric, and positive semidefinite. 
In the adaptive filtering framework, the system is driven by an external input $x(t)$, and the Hamiltonian evolves according to a real-valued gradient-based adaptation law:
\begin{equation}
\dot H(t) = -2\alpha \, e(t) \, \nabla_H e(t).
\end{equation}
This adaptation ensures that $H(t)$ remains real and symmetric throughout the simulation, with its positive semidefiniteness maintained either inherently or via a projection step. 
Therefore, while complex spectral parameters may appear in theoretical analyses of the operator $z J H(t)$, they do not enter the actual computation of the adaptive Hamiltonian, which remains entirely real-valued. The complex parameter affects only the spectral characterization of the system, not the evolution of $H(t)$ in the filtering algorithm. The filter state, a \(y(t)\) represents the internal state or ``memory" of the filter at each moment in time. The system produces an output \(u(t) = C y(t)\) by combining the components of this state using a fixed row vector \(C\), whose numbers act like weights that determine how the internal information contributes to the output. This vector \(C\) is often called the measurement or output matrix because it extracts the part of the internal state we want to observe. The overall goal is to adjust the system so that the output \(u(t)\) closely follows a desired reference signal \(r(t)\), meaning the filter learns to imitate or track the target signal as accurately as possible. The  desired signal   $r(t)$ and is assumed to be contaminated by additive noise. The \emph{clean input signal} is $x(t)$ and
the noise is $n(t)$. Thus,
\begin{equation} \label{rs}
    r(t) = x(t) + n(t),
\end{equation}
where $n(t)$ may represent measurement noise, channel distortion, or any
undesired perturbation. 
\begin{equation}
    e(t) = u(t) - r(t) = C y(t) - r(t).
\end{equation}
The filter dynamics are modeled as a canonical system driven by $r(t)$:
\begin{equation}\label{aca}
    \dot{y}(t) = -J H(t) y(t) + B r(t),
\end{equation}
where $H$ and $J$ are same as in \eqref{ca}, $ r(t)$ as in \eqref{rs} and $B \in \mathbb{R}^{2 \times 1}.$\\

Suppose the Hamiltonian function 
$
H : [0,T] \to \mathbb{R}^{2 \times 2}
$
 is of the form
\[
H(t) = \begin{pmatrix}
h_{11}(t) & h_{12}(t) \\
h_{12}(t) & h_{22}(t)
\end{pmatrix}. \]
 The core goal is to adapt \(H(t)\) to minimize  the cost function $ E(H(t)) = e(t)^2$ while preserving the structural and stability properties of the canonical system. 

 Using a continuous-time gradient descent approach, the update is
\begin{equation}
\label{eq:gradient_update}
\frac{d}{dt} H(t) = - \alpha \nabla_H (E(H(t))  = - 2 \alpha e(t) \nabla_H e(t),
\end{equation}
where \(\alpha > 0\) is a learning rate.
The gradient of the error with respect to each entry of $H$ is defined by
\begin{equation}
\nabla_H e(t) =
\begin{bmatrix}
\frac{\partial e(t)}{\partial h_{11}} & \frac{\partial e(t)}{\partial h_{12}} \\
\frac{\partial e(t)}{\partial h_{21}} & \frac{\partial e(t)}{\partial h_{22}}
\end{bmatrix}.
\end{equation}
Since \(r(t)\) in \eqref{aca} does not depend on \(H\), the gradient reduces to
\begin{equation} \label{grad}
\nabla_H e(t) = C \frac{\partial y(t)}{\partial H}.
\end{equation}
The derivatives of $y(t)$ with respect to $h_{ij}$  is  obtained by differentiating \eqref{aca} with respect to $h_{ij}$  and switching the order of differentiation, 
\[
\frac{d}{dt} \frac{\partial y}{\partial h_{ij}} = -J \left( E^{ij} y(t) + H(t) \frac{\partial y}{\partial h_{ij}} \right),
\]
where $E^{ij}$ is a $2\times 2$ matrix with $1$ at position $(i,j)$ and $0$ elsewhere.
For small time steps or instantaneous gradient calculations, we can ignore the feedback term $ -J H \frac{\partial y}{\partial h_{ij}}$ and approximate $
\frac{\partial y}{\partial h_{ij}} \approx -J E^{ij} y,$
so that

\begin{equation} \label{grads}
\frac{\partial y}{\partial h_{11}} \approx \begin{bmatrix} 0 \\ y_1 \end{bmatrix}, \quad
\frac{\partial y}{\partial h_{12}} \approx \begin{bmatrix} -y_1 \\ 0 \end{bmatrix}, \quad
\frac{\partial y}{\partial h_{21}} \approx \begin{bmatrix} 0 \\ y_2 \end{bmatrix}, \quad
\frac{\partial y}{\partial h_{22}} \approx \begin{bmatrix} - y_2 \\ 0 \end{bmatrix}.
\end{equation}

By using \eqref{grad} into \eqref{grads} and multiplying by $C$, we get
\begin{equation} \label{egrad}
\nabla_H e(t) =
\begin{bmatrix}
C \frac{\partial y}{\partial h_{11}} & C \frac{\partial y}{\partial h_{12}} \\
C \frac{\partial y}{\partial h_{21}} & C \frac{\partial y}{\partial h_{22}}
\end{bmatrix} =
\begin{bmatrix}
 c_2 y_1 & -c_1 y_1 \\
 c_2 y_2 & -c_1 y_2
\end{bmatrix}.
\end{equation}
 
Substituting the explicit gradient from \eqref{egrad} in \eqref{eq:gradient_update} we obtain
\begin{equation}
\frac{d}{dt} H(t) =
-2 \alpha e(t) 
\begin{bmatrix}
 c_2 y_1 & -c_1 y_1 \\
 c_2 y_2 & -c_1 y_2
\end{bmatrix}.
\end{equation} Each entry of $\nabla_H e(t)$ shows how the error $e(t)$ changes with respect to the corresponding entry $h_{ij}$ of the Hamiltonian. The gradient descent update moves $H(t)$ in the direction that reduces the error, scaled by the step size $\alpha$ and the magnitude of the current error $e(t)$.  However, it does not guarantee that $H(t)$ remains positive semidefinite, a property required to preserve the structural and stability characteristics of the canonical system. To enforce this constraint, the gradient is projected onto the set of positive semidefinite matrices $\mathcal{S}^+$, which we discuss in the following section.

\subsection{Discretization and Maintaining Positive Semidefiniteness}

Direct gradient updates may violate \(H(t) \geq 0\). To ensure positivity, at each update step, we project \(H(t)\) onto the set of  positive semidefinite  matrices.\\

Let $ \mathbb{R}^{2\times 2}$ be the set of all $2\times 2$ real matrices and  $S^+ \subset \mathbb{R}^{2\times 2}$ denote the set of symmetric positive semidefinite matrices. For any $X\in \mathbb{R}^{2\times 2}$, the projection of 
$X$ onto $ S^+$ is defined as
\begin{equation} \Pi_{S^+}(X)=  
= \underset{Y\in S^+}{\operatorname*{arg\,min}}\;\|X - Y\|_F.
\label{eq:minprob}
,\end{equation}  where $ \| . \|_F$ is a Frobenius norm, given by an inner product   $\langle X,Y\rangle_F = \mathrm{trace}(X^\top Y)$.
If $X$ is symmetric, perform an eigen decomposition $ X = Q\Lambda Q^\top $, $ \Lambda = \mathrm{diag}(\lambda_1, \lambda_2)$, then \[ \Pi_{S^+}(X)= Q\Lambda_+ Q^\top, \quad \Lambda_+ = \mathrm{ diag }\!\big(\max\{\lambda_1,0\},  \max\{\lambda_2,0\}\big). \]
 If $X$ is not symmetric, then symmetrize by
$ X_{\mathrm{s}}=\tfrac12(X+X^\top)$ then \[\Pi_{S^+}(X) =\Pi_{S^+}(X_s) \]   and find its projection. 
is its spectral decomposition.
 
Starting from the unconstrained gradient flow \eqref{eq:gradient_update} we obtain the steepest-descent direction of the instantaneous error with respect to the Hamiltonian. Because the Hamiltonian must remain positive semidefinite for all time, that is, $H(t)\in \mathcal S^{+}$, the raw direction above must be replaced by a feasible direction that lies in the tangent cone of $\mathcal S^{+}$. Projecting the gradient onto this cone yields the constrained evolution equation
\begin{equation}\label{ceq}
\dot H(t) = - \Pi_{\mathcal{S}_+}(\nabla_H E(H(t))) = -2 \alpha\, e(t)\, \Pi_{\mathcal{S}_+}(\nabla_H e(t)), \quad \alpha>0,
\end{equation}
which is the closest positive semidefinite one. For numerical implementation, this continuous flow is discretized in time. Using a forward Euler step gives the approximation
\begin{equation}
H(t+\Delta t)
\approx
H(t) - \Delta t\,\alpha\, e(t)\,\nabla_H e(t),
\end{equation}
but this update does not ensure that $H(t+\Delta t)\geq 0$. To maintain feasibility, the update is projected back onto $ S^+$
\begin{equation}
H(t+\Delta t)
=
\Pi_{S^+}\!\left(
H(t) - \Delta t\,\alpha\, e(t)\,\nabla_H e(t)
\right),
\end{equation}
or equivalently, the projection may be applied directly to the gradient before the discrete step,
\begin{equation}
H(t+\Delta t)
=
H(t) - \Delta t\,\alpha\, e(t)\,
\Pi_{S^+}\!\left(\nabla_H e(t)\right).
\end{equation}
Both formulations are consistent first--order discretizations of the continuous flow and both guarantee that every iterate remains positive semidefinite. Thus, the same projection that enforces feasibility in continuous time also ensures that the discrete update keeps the Hamiltonian inside $S^+$ at each step.

\begin{lemma} \label{lemma 1}  
Let $H(t)\in \mathbb{R}^{2\times 2}$ be a differentiable real symmetric matrix valued function. Suppose that for some $t_{0}$ the smallest eigenvalue 
$\lambda_{\min}(t_{0})$ is simple. Let $v_{\min}(t)$ be the corresponding 
unit eigenvector chosen smoothly near $t_{0}$. Then the function 
$\lambda_{\min}(t)$ is differentiable at $t_{0}$ and satisfies
\begin{equation}\label{dotl}
\dot{\lambda}_{\min}(t_{0})
=
v_{\min}(t_{0})^{\top}\dot{H}(t_{0})\, v_{\min}(t_{0}).
\end{equation}
\end{lemma}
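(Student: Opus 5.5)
The plan is the standard Hellmann--Feynman argument, arranged so that differentiability of $\lambda_{\min}$ comes out first and the derivative formula second.

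\textbf{Step 1: differentiability of $\lambda_{\min}$ near $t_0$.} For a real symmetric $2\times 2$ matrix the eigenvalues are explicit:
\[
\lambda_{\min}(t)=\tfrac12\Big(\operatorname{tr}H(t)-\sqrt{(h_{11}(t)-h_{22}(t))^2+4h_{12}(t)^2}\Big).
\]
Simplicity of $\lambda_{\min}(t_0)$ means the discriminant $\delta(t)=(h_{11}-h_{22})^2+4h_{12}^2$ is strictly positive at $t_0$. Since $\delta$ is continuous, it stays positive on a neighbourhood of $t_0$. The square root is smooth on $(0,\infty)$, so by the chain rule $\lambda_{\min}$ is differentiable at $t_0$. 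This settles the first claim without any implicit-function machinery.

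\textbf{Step 2: the derivative formula.} Near $t_0$ we have the identity $\lambda_{\min}(t)=v_{\min}(t)^\top H(t)v_{\min}(t)$, where $v_{\min}$ is the unit eigenvector chosen differentiably, as the hypothesis allows. Differentiating with the product rule at $t_0$ gives
\[
\dot\lambda_{\min}=\dot v^\top H v+v^\top\dot H v+v^\top H\dot v .
\]
By symmetry of $H$ and the eigen-relation $Hv=\lambda_{\min}v$, the two outer terms together equal $2\lambda_{\min}\,v^\top\dot v$. Differentiating the normalization $v^\top v=1$ gives $v^\top\dot v=0$, so these terms vanish. What remains is exactly \eqref{dotl}.

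\textbf{Main obstacle: the smoothness of $v_{\min}$.} The statement assumes that $v_{\min}$ is chosen smoothly near $t_0$, and Step 2 needs it to be differentiable at $t_0$. If this needs justification rather than being taken as given, I would first try an explicit construction. Take the normalized version of
\[
\big(h_{12},\ \lambda_{\min}-h_{11}\big)^\top \quad\text{or}\quad \big(\lambda_{\min}-h_{22},\ h_{12}\big)^\top,
\]
choosing whichever is nonzero at $t_0$. At least one is nonzero: if both vanished, then $h_{12}=0$ and $\lambda_{\min}=h_{11}=h_{22}$, which contradicts $\delta(t_0)>0$. By Step 1 and continuity, the chosen vector stays nonzero and is differentiable near $t_0$.

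A variant of Step 2 avoids differentiating $v_{\min}$ at all; I would keep it as a fallback. Write $v_0=v_{\min}(t_0)$ and apply the difference quotient to $(H(t)-\lambda_{\min}(t))v(t)=0$, then multiply on the left by $v_0^\top$. Using $v_0^\top(H(t_0)-\lambda_{\min}(t_0))=0$, this yields
\[
v_0^\top\big(H(t)-H(t_0)\big)v(t)=\big(\lambda_{\min}(t)-\lambda_{\min}(t_0)\big)\,v_0^\top v(t).
\]
Dividing by $t-t_0$ and letting $t\to t_0$ gives the formula, because $v(t)\to v_0$ and hence $v_0^\top v(t)\to1$. This route needs only continuity of $v$, not differentiability.
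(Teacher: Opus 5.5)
Your proposal is correct and is essentially the paper's Hellmann--Feynman argument: the paper differentiates $H v_{\min}=\lambda_{\min}v_{\min}$ and multiplies on the left by $v_{\min}^{\top}$, while you differentiate $\lambda_{\min}=v_{\min}^{\top}Hv_{\min}$. Both conclude using the symmetry of $H$ and $v_{\min}^{\top}\dot v_{\min}=0$. Your Step 1, the explicit eigenvector construction, and the difference-quotient variant go further, justifying the differentiability of $\lambda_{\min}$ and the smooth (or merely continuous) choice of $v_{\min}$, which the paper's proof takes for granted.
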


\begin{proof}  Differentiate the eigenvalue equation
\[
H(t)v_{\min}(t)=\lambda_{\min}(t)v_{\min}(t)
\]
with respect to $t$:
\begin{equation}\label{eve}
\dot{H}(t)v_{\min}(t)
+H(t)\dot{v}_{\min}(t)
=
\dot{\lambda}_{\min}(t)v_{\min}(t)
+
\lambda_{\min}(t)\dot{v}_{\min}(t).
\end{equation}
Multiply \eqref{eve} from the left by $v_{\min}(t)^{\top}$ and use both symmetry 
$H(t)=H(t)^{\top}$ and the normalization condition 
$v_{\min}(t)^{\top}\dot{v}_{\min}(t)=0$ (obtained by differentiating 
$v_{\min}(t)^{\top}v_{\min}(t)=1$). Then
\[
v_{\min}(t)^{\top}\dot{H}(t)v_{\min}(t)
=
\dot{\lambda}_{\min}(t).
\]
Evaluating at $t=t_{0}$ yields the desired formula \eqref{dotl}.
\end{proof}

\begin{theorem}
Let $H(t)\in\mathbb{R}^{2\times 2}$ evolve according to \eqref{ceq}.  
If $H(0)\geq 0$, then
\[
H(t)\geq 0 \qquad \text{for all } t\ge 0.
\]
\end{theorem}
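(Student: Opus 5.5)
The plan is a first-exit (barrier) argument on $\lambda_{\min}(t)$, the smallest eigenvalue of $H(t)$. The derivative of $\lambda_{\min}$ at the boundary of $S^+$ is supplied by Lemma \ref{lemma 1}.

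\textbf{Step 1: symmetry.} First I will check that $H(t)$ stays real symmetric. The projection $\Pi_{S^+}$ always returns a symmetric matrix, since it symmetrizes its argument first. So the right-hand side of \eqref{ceq} is symmetric, and $H(t)=H(0)+\int_0^t \dot H(s)\,ds$ is symmetric for all $t$. In particular $\lambda_{\min}(t)$ is real, and it is continuous in $t$.

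\textbf{Step 2: setting up the contradiction.} Suppose $H(t)\geq 0$ fails for some $t$, and let $t^*=\inf\{t\ge 0:\lambda_{\min}(t)<0\}$. By continuity $\lambda_{\min}(t^*)=0$, and $H(t)\geq 0$ on $[0,t^*]$.

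\textbf{Step 3: the key inequality.} The heart of the argument is to show that at any time where $H(t)$ is singular,
\[
v^\top \dot H(t)\, v \;\ge\; 0 \qquad \text{for every unit } v\in\ker H(t).
\]
Here I will use the interpretation stated just before \eqref{ceq}: the projected direction is a feasible direction in the tangent cone of $S^+$ at $H(t)$. That cone is exactly $\{D=D^\top : v^\top D v\ge 0 \text{ for all } v\in\ker H\}$.

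\textbf{Step 4: derivative of $\lambda_{\min}$ at $t^*$.} There are two cases.

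\emph{Simple eigenvalue.} If $\lambda_{\min}(t^*)$ is simple, Lemma \ref{lemma 1} gives $\dot\lambda_{\min}(t^*)=v_{\min}^\top\dot H(t^*)v_{\min}\ge 0$.

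\emph{Double eigenvalue.} In the $2\times 2$ setting this forces $H(t^*)=0$. I will then use the variational formula $\lambda_{\min}(t)=\min_{\|v\|=1}v^\top H(t)v$ together with Danskin's theorem. This gives the one-sided derivative
\[
\dot\lambda_{\min}(t^{*+})=\min_{v\in\ker H,\ \|v\|=1} v^\top\dot H(t^*)v \;\ge\; 0.
\]

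\textbf{Main obstacle.} A nonnegative derivative at $t^*$ does not by itself prevent $\lambda_{\min}$ from becoming negative just after $t^*$; equality can hold. I would address this in one of two ways.

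\emph{Option A: viability theorem.} Invoke Nagumo's viability theorem for the closed convex set $S^+$. It applies because the vector field lies in the tangent cone at every point of the boundary, and because the right-hand side is continuous, since $\Pi_{S^+}$ is $1$-Lipschitz.

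\emph{Option B: perturbation.} Add $\varepsilon I$ to the dynamics. This makes the derivative strictly positive on the boundary, so the first-exit argument gives a genuine contradiction. Then let $\varepsilon\to 0$, using continuous dependence of solutions on the vector field.

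\textbf{A caveat I would flag.} If \eqref{ceq} is read literally as $-2\alpha e\,\Pi_{S^+}(\nabla_H e)$, then whenever $e(t)>0$ the right-hand side is negative semidefinite. In that case the inequality in Step 3 fails, and the proof must rely on the tangent-cone interpretation of the projection.
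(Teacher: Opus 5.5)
\textbf{The gap is Step 3, and it cannot be closed, because the statement as written is false.} The inequality $v^\top\dot H v\ge 0$ on $\ker H(t)$ does not follow from \eqref{ceq}. The caveat you raise at the end is therefore fatal, not a side issue.

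Here is a counterexample within the paper's own model. Take \eqref{aca} and \eqref{egrad} with $C=(1,0)$, $r\equiv 0$, $y(0)=(1,0)^\top$ and $H(0)=\mathrm{diag}(0,1)\ge 0$. Then $e(0)=1$ and $\nabla_H e(0)=\begin{pmatrix}0&-1\\0&0\end{pmatrix}$. Its symmetrization has eigenvalues $\pm\tfrac12$, so
\[
\Pi_{S^+}(\nabla_H e(0))=\tfrac14\begin{pmatrix}1&-1\\-1&1\end{pmatrix},\qquad \dot H(0)=-\tfrac{\alpha}{2}\begin{pmatrix}1&-1\\-1&1\end{pmatrix}.
\]
With $v=(1,0)^\top$ spanning $\ker H(0)$, you get $\lambda_{\min}(H(t))\le v^\top H(t)v=-\tfrac{\alpha}{2}t+o(t)<0$ for small $t>0$.

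More generally, whenever $e(t)>0$ the right-hand side of \eqref{ceq} is negative semidefinite. It is then outside the tangent cone at any singular $H(t)$ with $v^\top\Pi_{S^+}(\nabla_H e(t))v>0$ for some $v\in\ker H(t)$. The other expression in \eqref{ceq}, $-\Pi_{S^+}(\nabla_H E)$, is negative semidefinite for either sign of $e$. Appealing to ``the tangent-cone interpretation'' therefore changes the dynamics in the hypothesis; it does not prove the stated theorem.

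Your route is the paper's route: $\lambda_{\min}$, the eigenvalue-derivative lemma, and the claim (Case 2 of the paper's proof) that the projection forces $\dot H(t)\in T_{S^+}(H(t))$. The paper makes exactly the unjustified step you isolated and never accounts for the sign of $-2\alpha e(t)$. Its Case 3 also asserts $\dot\lambda_{\min}(t_0)=0$ at a first zero. That is unsupported: the derivative equals $-2\alpha e(t_0)\,v_{\min}^\top\Pi_{S^+}(\nabla_H e(t_0))v_{\min}$, which is $-\alpha/2$ in the example above. And, as your ``main obstacle'' remark notes, a zero derivative would not prevent crossing anyway.

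Your handling of the double eigenvalue and of the degenerate derivative is more careful than the paper's. However, Option A combines two incompatible readings:
\begin{itemize}
\item The continuity you cite ($1$-Lipschitzness of $\Pi_{S^+}$) belongs to the literal law, which is not tangent.
\item The natural tangent law, projection onto $T_{S^+}(H)$, is discontinuous in $H$ at $\partial S^+$. So neither the classical Nagumo theorem nor the $\varepsilon\to 0$ limit of Option B applies to it as stated.
\end{itemize}

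A consistent repair is to change the law to $\dot H=\Pi_{S^+}(-2\alpha e\,\nabla_H e)$, or to keep \eqref{ceq} and assume $e(t)\le 0$. In either case $\dot H(s)\in S^+$ for all $s$, and $H(t)=H(0)+\int_0^t\dot H(s)\,ds\in S^+$ simply because $S^+$ is a closed convex cone. No barrier argument is needed, at the price that $H$ becomes nondecreasing in the Loewner order.
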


\begin{proof} Suppose $H(0) \geq 0.$ Then   $  \lambda_{min}(0) \geq 0.$ For $t>0 ,$ let $\lambda_{\min}(t)$ be the minimal eigenvalue of $H(t)$, with unit eigenvector
$v_{\min}(t)$ so that
\[
H(t)v_{\min}(t)=\lambda_{\min}(t)v_{\min}(t), 
\qquad \|v_{\min}(t)\|=1.
\] So, in order to prove $H(t)\geq 0$ for all $t$, it suffices to show  $\lambda_{\min}(t) \geq 0,$ for all $t.$\\
 By Lemma \eqref{lemma 1} we get,
\begin{equation}
\dot\lambda_{\min}(t)
= v_{\min}(t)^{\!\top}\, \dot H(t)\, v_{\min}(t).
\label{eq:lambdaderivative}
\end{equation}
Substituting from $\dot H(t)$ from the dynamics \eqref{ceq} yields
\begin{equation}
\dot\lambda_{\min}(t)
= -2\alpha\, e(t)\,
v_{\min}^{\!\top}(t)\, 
\Pi_{S^+}(\nabla_H e(t))\, 
v_{\min}(t).
\label{eq:lambdathm}
\end{equation}
Since $\Pi_{S^+}(\cdot)$ is the orthogonal projection onto the set of positive semidefinite matrices we have
\begin{equation}
v_{\min}^{\!\top}(t)\,
\Pi_{S^+}(\nabla_H e(t))\,
v_{\min}(t)
\ge 0.
\label{eq:nonneg}
\end{equation}
We now study the evolution of $\lambda_{\min}(t)$.\\

\paragraph{Case 1: $\lambda_{\min}(t) > 0$.}
Suppose that at some time $t_0$ we have $\lambda_{\min}(t_0) > 0$, so that $H(t_0)$ is strictly positive definite.  
From \eqref{eq:lambdathm}--\eqref{eq:nonneg}, the derivative $\dot{\lambda}_{\min}(t)$ is finite and may have either sign depending on the value of $e(t)$.Since $H(t)$ depends continuously on $t$, and the eigenvalues of a symmetric matrix depend continuously on its entries, the function $\lambda_{\min}(t)$ is continuous. Consequently, starting from a strictly positive value, $\lambda_{\min}(t)$ cannot cross into the negative region in finite time. Therefore, there exists an interval $[t_0,t_1)$ such that
\[
\lambda_{\min}(t) > 0 \quad \text{for all } t \in [t_0,t_1),
\]
and hence
\[
H(t) \in S^+ \quad \text{for all } t \in [t_0,t_1).
\]
 
\paragraph{Case 2: $\lambda_{\min}(t) = 0$.}
Suppose that at time $t$ we have $\lambda_{\min}(t)=0$, so that
$H(t)\in \partial S^+$. Let $v_{\min}(t)$ be a unit eigenvector
associated with $\lambda_{\min}(t)$, so that
\[
H(t)v_{\min}(t)=0.
\]

From \eqref{eq:lambdathm}, we have
\[
\dot{\lambda}_{\min}(t)
= -2\alpha e(t)\,
v_{\min}(t)^\top
\Pi_{\mathcal S^+}\big(\nabla_H e(t)\big)
v_{\min}(t).
\]
Define
\[
g(t)
:= v_{\min}(t)^\top
\Pi_{\mathcal S^+}\big(\nabla_H e(t)\big)
v_{\min}(t).
\]
Since $\Pi_{\mathcal S^+}(\cdot)$ is symmetric positive semidefinite,
it follows that
\[
g(t) \ge 0.
\] To establish invariance of $\mathcal S^+$, we examine the geometry of the cone
at the boundary point $H(t)$. The tangent cone to $\mathcal S^+$ at $H(t)$ is
given by
\[
T_{\mathcal S^+}(H(t))
=
\left\{
Z \in \mathbb{R}^{2\times 2}_{\mathrm{sym}}
\;\middle|\;
v^\top Z v \ge 0
\;\; \text{for all } v \in \ker H(t)
\right\}.
\]
Since $v_{\min}(t)\in \ker H(t)$ and
\[
v_{\min}(t)^\top \dot H(t)\, v_{\min}(t)
=
-2\alpha e(t)\, g(t),
\]
the only directions that could decrease $\lambda_{\min}(t)$ correspond to
negative values of $v_{\min}(t)^\top \dot H(t) v_{\min}(t)$.
However, the projection $\Pi_{\mathcal S^+}$ removes all components of
$\nabla_H e(t)$ that point outside $\mathcal S^+$, ensuring that
$\dot H(t)\in T_{\mathcal S^+}(H(t))$.
Therefore, no outward-pointing velocity is permitted at the boundary, and
$\lambda_{\min}(t)$ cannot become negative. Hence,
\[
H(t) \in \mathcal S^+ \quad \text{for all } t \ge 0.
\]

Case 3:  Suppose there exists a time $t_1 > 0$ such that $\lambda_{\min}(t_1) < 0$. By continuity of $\lambda_{\min}(t)$ and the initial condition $\lambda_{\min}(0) \ge 0$, there must exist a first time $t_0 \in [0,t_1)$ with $\lambda_{\min}(t_0) = 0$. At this time, $\dot \lambda_{\min}(t_0) = 0$, so the minimal eigenvalue cannot decrease further, contradicting the assumption that $\lambda_{\min}(t_1) < 0$. Hence, $\lambda_{\min}(t) < 0$ cannot occur.\\

Combining these cases, it shows that $
\lambda_{\min}(t) \ge 0 \quad \text{for all } t \ge 0, $ and therefore $H(t)$ remains positive semidefinite for all $t$.

\end{proof}

\subsection{Stability of the Adaptive Gradient Flow}
In signal processing, the Lyapunov function provides a convenient way to assess the stability and convergence of error signals in adaptive systems. For a tracking error $e(t)$, a natural choice is
\begin{equation} \label{lf}
V(t) = \frac{1}{2} e(t)^2,
\end{equation}
which is nonnegative and equals zero only when the error vanishes. This function can be interpreted as the instantaneous ``energy'' of the error signal. By examining its time derivative
\[
\dot V(t) = e(t)\, \dot e(t),
\]
one can determine whether the error grows or decays over time. If $\dot V(t) \le 0$, the error energy is non-increasing, ensuring that the system remains stable and the error is bounded. When $\dot V(t) < 0$ for a nonzero error, the error energy decreases, leading to convergence toward zero. Lyapunov functions are widely used in adaptive filtering and control to guarantee convergence of signals without requiring explicit solutions of the system equations, please see \cite{Haykin2014}.

\begin{theorem}
Let $H(t) \in \mathbb{R}^{2\times 2}$ evolve according to \eqref{ceq},   $y(t) \in \mathbb{R}^2$ evolve according to\eqref{aca}, and $V(t)$ is as defined in \eqref{lf}.  If $r(t) \equiv r_0$, a constant and $C B r_0 = 0$, then the tracking error $e(t)$ is bounded for all $t\geq 0 $. For the time-varying reference, if $r(t)$ and $\dot r(t)$ are bounded, then $\dot V(t)$    and the tracking error $e(t)$ remains bounded for all $t\geq 0 $.

\end{theorem}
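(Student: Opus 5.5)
The plan is to compute $\dot V = e\,\dot e$ along the coupled system \eqref{aca}--\eqref{ceq} and bound it. Since $C$ is fixed, $\dot e(t) = C\dot y(t) - \dot r(t) = -CJH(t)y(t) + CB\,r(t) - \dot r(t)$, so
\[
\dot V(t) = -e(t)\,CJH(t)y(t) + e(t)\,CB\,r(t) - e(t)\,\dot r(t).
\]
The first step is to control $H(t)$ and $y(t)$. By the preceding theorem, $H(t)\ge 0$ for all $t$. To bound the energy of $y$, I will use $W(t)=\tfrac12 y^\top y$. Because $H$ is symmetric, $y^\top J H y$ is not sign-definite in general, so the route is Gronwall: $\dot W \le \|H(t)\|\,\|y\|^2 + \|B\|\,|r|\,\|y\|$. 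This needs $\|H(t)\|$ to be bounded (or locally integrable) on the interval under consideration. That bound in turn comes from \eqref{ceq}: $\|\dot H\|\le 2\alpha|e|\,\|\Pi_{S^+}(\nabla_H e)\|_F \le 2\alpha |e|\,\|\nabla_H e\|_F$, since projection onto a closed convex set containing $0$ is nonexpansive. Together with \eqref{egrad}, which gives $\|\nabla_H e\|_F \le \|C\|\,\|y\|$, this yields a closed differential inequality for the triple $(\|y\|,\|H\|,|e|)$.

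For the constant reference case $r\equiv r_0$ with $CBr_0=0$, the terms $e\,CB\,r$ and $e\,\dot r$ vanish, leaving $\dot V = -e\,CJHy$. The step I expect to be the main obstacle is showing this is nonpositive or otherwise controlled. My first attempt will be to use $e\,\dot H$ from \eqref{ceq} and construct a composite Lyapunov function
\[
\mathcal V = V + \tfrac{1}{4\alpha}\|H-H^*\|_F^2,
\]
in the spirit of classical adaptive control, so that the cross term $-e\,CJHy$ is cancelled by the inner product $\langle H-H^*,\dot H\rangle_F$. The projection only helps here, because $\langle H-H^*, \Pi\text{-correction}\rangle\le 0$ for $H^*\in S^+$. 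If an exact cancellation fails, owing to the approximation in \eqref{grads}, I will fall back to a local argument. On any finite horizon $[0,T]$, the Gronwall bounds above show $y$, $H$, and $e$ are bounded, since $e = Cy - r_0$. I will then argue that $\dot V\le 0$ whenever $|e|$ is large, via the sign structure of the gradient term. This gives boundedness of $e$ uniformly in $t$.

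For the time-varying case, with $|r|\le R$ and $|\dot r|\le R'$, I will use Young's inequality on the forcing terms: $|e\,CB\,r| + |e\,\dot r| \le \tfrac12 e^2 + (\|C\|\|B\|R+R')^2$. This gives $\dot V \le -e\,CJHy + V + K$. Combined with the same control of the cross term (or the composite function $\mathcal V$), it yields $\dot{\mathcal V}\le c\,\mathcal V + K$. Gronwall then gives boundedness of $V$, hence of $e$, and also of $\dot V$, since each term is a product of bounded quantities. I would state clearly that without the composite-Lyapunov cancellation the bound is only on finite horizons, and flag the handling of $-e\,CJHy$ as the crux.
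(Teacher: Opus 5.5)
You correctly isolate $-e\,CJHy$ as the crux, but none of your mechanisms for controlling it goes through. Start with the composite function. Write $G_s$ for the symmetric part of $\nabla_H e$ in \eqref{egrad}. A direct computation gives $-CJHy=\langle H,G_s\rangle_F$ exactly for symmetric $H$. So the approximation \eqref{grads} does not spoil the cancellation --- it is exactly what produces it. With $H^*=0$ and the \emph{unprojected} symmetric law $\dot H=-2\alpha e\,G_s$, one gets $\frac{d}{dt}\big(V+\tfrac{1}{4\alpha}\|H\|_F^2\big)=e\,(CBr-\dot r)$, which vanishes in the constant case. What breaks it is the projection in \eqref{ceq}, under which
\[
\dot{\mathcal V}=e\,\big\langle H,\,G_s-\Pi_{S^+}(G_s)\big\rangle_F+e\,\big\langle H^*,\,\Pi_{S^+}(G_s)\big\rangle_F+e\,(CBr-\dot r).
\]
Since $\det G_s=-\tfrac14(Cy)^2$, $G_s$ is indefinite whenever $Cy\neq 0$. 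So the clipped part $G_s-\Pi_{S^+}(G_s)\preceq 0$ is nonzero, and it is multiplied by $e$, which has no sign: even with $H\succeq 0$ the first term is $\ge 0$ whenever $e<0$. The adaptive-control inequality $\tilde\theta^\top(\mathrm{Proj}(\tau)-\tau)\le 0$ you are invoking concerns projecting the \emph{update} at the boundary of the parameter set. By contrast, \eqref{ceq} clips the gradient onto a cone before multiplying by the signed error. So ``the projection only helps'' is false here, and for $H^*\neq 0$ the middle term survives as well.

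Your fallbacks fail too. The closed inequality is superlinear. With $a=\|y\|$ and $b=\|H\|_F$ you get $\dot a\le ab+\|B\|\,|r|$ and $\dot b\le 2\alpha\|C\|\,a\,(\|C\|a+|r|)$. The comparison system $\dot a=ab$, $\dot b=\gamma a^2$ blows up in finite time, so Gronwall yields no bound even on a fixed $[0,T]$. There is also no sign structure making $\dot V\le 0$ for large $|e|$. The expression $\dot V=-e\,CJHy$ contains no adaptation term, and for $C\neq0$ and any $H\succ 0$ the functionals $y\mapsto Cy$ and $y\mapsto CJHy$ are linearly independent (the eigenvalues of $JH$ are $\pm i\sqrt{\det H}$). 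Hence at every level of $|e|$ there are states with $\dot V>0$. In the time-varying case, $\dot{\mathcal V}\le c\,\mathcal V+K$ gives only $\mathcal V(t)\le(\mathcal V(0)+K/c)e^{ct}$. Even exact cancellation would give only $\frac{d}{dt}\sqrt{\mathcal V}\le K_1/\sqrt2$ with $K_1=\sup_t|CBr-\dot r|$, i.e.\ quadratic growth. Nothing in the law adds damping, and with $H$ frozen the system is an undamped oscillator that resonates under bounded forcing at frequency $\sqrt{\det H}$. So uniform-in-time bounds cannot come from such estimates.

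For comparison, the paper does not resolve the crux either. It simply assumes $H$ and $y$ are bounded (which already makes $e=Cy-r$ bounded) and then infers boundedness of $e$ from that of $\dot V$. That inference does not bound $V$ on $[0,\infty)$, and it is circular in the time-varying case because the paper's bound on $|\dot V|$ contains $|e|$. A correct statement needs boundedness of $H$ and $y$ as a hypothesis, or a different adaptation law. For example, with the unprojected symmetric law the conserved quantity above settles the constant case.
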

\begin{proof}
The derivative of the Lyapunov function is
\begin{equation} \label{dlf}
\dot V(t) = e(t) \dot e(t) = e(t) \big(C \dot y(t) - \dot r(t)\big).
\end{equation}
Substituting the system dynamics \eqref{aca} into \eqref{dlf}, gives
\begin{equation}\label{eq2.23}
\dot V(t) = e(t) \big[- C J H(t) y(t) + C B r(t) - \dot r(t)\big].
\end{equation}
 If $r(t) = r_0$ and $C B r_0 = 0$, then
\[
\dot V(t) = - e(t) C J H(t) y(t).
\]
The sign of $- e(t) CJ H(t) y(t)$ may be positive or negative depending on $e(t)$ and $y(t)$, so $\dot V(t) \le 0$ cannot be guaranteed. However, if $H(t)$ and $y(t)$ are bounded, $\dot V(t)$ is bounded, implying $e(t)$ is bounded. If $r(t)$ and $\dot r(t)$ are bounded, then by \eqref{eq2.23} we get,
\[
|\dot V(t)| \le |e(t)| \Big( \|CJ H(t) y(t)\| + |C B r(t) - \dot r(t)| \Big),
\]
which is bounded. Hence $e(t) $ is also bounded. The Lyapunov function $V(t)$ remains bounded, and therefore the tracking error $e(t)$ remains bounded for all time. This implies the system is  Lyapunov stable in the sense of boundedness. 
\end{proof}

\textbf{Remark:}  In numerical simulations, $e(t)$ often converges to zero since $\Pi_{S_+}(\nabla_H e(t))$ is generically nonzero along trajectories. Rigorous convergence $e(t) \to 0$ requires a persistent excitation condition  in the adaptive law.

\subsection{Sensitivity Bounds}

Since $H(t)$ is a matrix-valued function and $e(t)$ is a vector-valued error signal, the derivative $\nabla_H e(t)$ represents the sensitivity of the error with respect to perturbations of the Hamiltonian. The exact sensitivity operator
\begin{equation}\label{so}
\Phi(t) := \frac{\partial y(t)}{\partial H}
\end{equation}
satisfies a high-dimensional linear time-varying differential equation
and is costly to compute. 
For practical adaptive filtering, we approximate the gradient using a finite-difference perturbation which is obtained from the following lemma.
\begin{lemma}[Finite-Difference Approximation]\label{lem:FD}
Let $e(t;H)$ be differentiable with respect to $H$.  
For small perturbations $\Delta H$,
\begin{equation} \label{pert}
    \nabla_H e(t)
    \approx \frac{e(t;H+\Delta H)-e(t;H)}{\|\Delta H\|},
\end{equation}
and the approximation error satisfies
\[
    \Bigg\|
        \nabla_H e(t)
        -
        \frac{e(t;H+\Delta H)-e(t;H)}{\|\Delta H\|}
    \Bigg\|
    = O(\|\Delta H\|).
\]
\end{lemma}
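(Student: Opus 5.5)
The plan is a first-order Taylor expansion of the map $H\mapsto e(t;H)$, read correctly. First the statement has to be given a precise meaning. Its left-hand side $\nabla_H e(t)$ is a $2\times 2$ matrix, while the quotient on the right is a scalar. I will therefore prove it in the directional form: for a perturbation $\Delta H=\varepsilon D$ with $\|D\|=1$ and $\varepsilon=\|\Delta H\|$,
\[
\frac{e(t;H+\Delta H)-e(t;H)}{\|\Delta H\|}=\langle \nabla_H e(t), D\rangle_F + O(\|\Delta H\|).
\]
The full matrix approximation then follows entrywise. Take $D=E^{ij}$ (or its symmetrization $\tfrac12(E^{ij}+E^{ji})$ if $H$ is kept symmetric) for each of the finitely many entries. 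The Frobenius pairing recovers the $(i,j)$ entry of \eqref{egrad}, and a maximum of four $O(\varepsilon)$ errors is still $O(\varepsilon)$.

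The key step is to justify that $e(t;\cdot)$ is $C^2$ near $H$, with a bound on its second derivative that is uniform on a neighbourhood; mere differentiability does not give an $O(\|\Delta H\|)$ remainder. I would argue as follows.

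\begin{enumerate}
\item For a frozen $H$ (or a fixed trajectory perturbed by $\Delta H$), the state equation \eqref{aca} is a linear ODE whose coefficient $-JH$ depends linearly, hence smoothly, on the parameter. By the standard theorem on smooth dependence of ODE solutions on parameters, $H\mapsto y(t;H)$ is $C^\infty$ on $[0,t]$.
\item The first parameter derivative $\Phi=\partial y/\partial H$ from \eqref{so} satisfies the variational equation derived before \eqref{grads}. Differentiating once more gives a second variational equation, driven by terms $-J E^{ij}\,\partial y/\partial h_{kl}$.
\item Gronwall's inequality applied to both equations bounds $\|y\|$, $\|\Phi\|$ and $\|\partial^2 y/\partial H^2\|$ on $[0,t]$. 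These bounds depend on $\sup\|H\|$, $\|B\|$ and $\sup|r|$, and hold uniformly for $\|\Delta H\|\le \delta$.
\item Since $e=Cy-r$ and $r$ is independent of $H$, $e(t;\cdot)$ is $C^2$ with $\|\partial_H^2 e\|\le M:=\|C\|\sup\|\partial_H^2 y\|$.
\end{enumerate}

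With this in hand, Taylor's theorem with integral remainder along the segment $s\mapsto H+s\Delta H$ gives
\[
e(t;H+\Delta H)=e(t;H)+\langle \nabla_H e(t),\Delta H\rangle_F+R,\qquad |R|\le \tfrac{M}{2}\|\Delta H\|^2 .
\]
Dividing by $\|\Delta H\|$ yields the directional version with error at most $\tfrac M2\|\Delta H\|$, which is $O(\|\Delta H\|)$. Assembling the coordinate directions as described above gives the matrix statement.

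I expect the main obstacle to be the uniform second-derivative bound in the key step, not the Taylor expansion itself. The constant $M$ grows like $e^{c\,t\sup\|H\|}$, so the $O(\|\Delta H\|)$ is uniform only on bounded time intervals and bounded $H$. I would state that dependence explicitly rather than claim uniformity in $t$.
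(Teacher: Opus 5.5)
Your proof is correct, but it takes a genuinely different and stronger route than the paper. The paper's proof is one line: it writes the Fr\'echet expansion $e(t;H+\Delta H)-e(t;H)=\langle\nabla_H e(t),\Delta H\rangle+o(\|\Delta H\|)$ and divides by $\|\Delta H\|$. Read literally, this gives only $\langle\nabla_H e(t),\Delta H/\|\Delta H\|\rangle+o(1)$. That is a single directional derivative, a scalar rather than the matrix $\nabla_H e(t)$, and its error vanishes but carries no rate. So the claimed $O(\|\Delta H\|)$ does not follow from mere differentiability.

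You repair both defects. You make the statement well typed by proving it directionally and assembling the matrix from four coordinate quotients with $\Delta H=\varepsilon E^{ij}$. You then get the linear rate from the structure of the model:
\begin{enumerate}
\item $H$ enters \eqref{aca} linearly, so $y(t;\cdot)$ depends smoothly on $H$.
\item The second variational equation plus Gr\"onwall (as in Lemma~\ref{lem:fund-solution-bound}) bounds $\partial_H^2 y$ uniformly near $H$.
\item Taylor's formula with integral remainder then gives the explicit error $\tfrac M2\|\Delta H\|$.
\end{enumerate}
The paper's argument is shorter and model-independent, but it only establishes consistency of the difference quotient. Yours needs the specific linear ODE and a second-order bound; a Lipschitz bound on $\nabla_H e$, i.e.\ $C^{1,1}$, would already suffice. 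In exchange, it actually proves the stated rate, with explicit dependence on $t$ and $\sup\|H\|$.

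Two small corrections. First, the pairing $\langle\nabla_H e(t),E^{ij}\rangle_F$ recovers the $(i,j)$ entry of the exact gradient $C\,\partial y/\partial H$ in \eqref{grad}, not of \eqref{egrad}. The latter is the paper's heuristic that discards the feedback term $-JH\,\partial y/\partial h_{ij}$, and the difference quotient does not in general converge to it. Second, your restriction to an exogenous (frozen or prescribed) path $H(\cdot)$ is essential. If $H$ were instead coupled to $y$ through the projected law \eqref{ceq}, the $C^2$ dependence used in your key step could fail, because $\Pi_{S^+}$ is only Lipschitz, not $C^1$, at matrices with a zero eigenvalue.
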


\begin{proof}
The directional derivative formula for Fr\'echet differentiable maps gives
\[
    e(t;H+\Delta H)-e(t;H)
    = \langle \nabla_H e(t), \Delta H \rangle + o(\|\Delta H\|).
\]
Dividing by $\|\Delta H\|$ yields the result.
\end{proof}
So we approximate the gradient  sensitivity operator by \eqref{pert}.\\

Suppose for each $s\in \R$ a solution $u(t)$ of the system \eqref{ca} is written of the form $ u(t)= U(t,s) u(s)$. then the matrix $ U(t,s)$, called the state transition matrix and is a unique solution to  
\begin{equation} \label{fm}
\frac{d}{dt}U(t,s)=-JH(t)U(t,s), \qquad U(s,s)=I.  
\end{equation}

 \begin{lemma} Let $H(t)\in\mathbb{R}^{2\times 2}$ be continuous on $[0,T]$, and consider the Hamiltonian system
\begin{equation}
\dot u(t)=-JH(t)u(t).
\end{equation}
Then, for all $s,t\in[0,T]$,
\begin{equation}
U(s,t)=U(t,s)^{-1}.
\end{equation}
Moreover, $U(s,t)$ satisfies the backward differential equation
\begin{equation}
\frac{d}{dt}U(s,t)=U(s,t)JH(t), \qquad U(s,s)=I.
\end{equation}
\end{lemma}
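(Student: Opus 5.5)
We will use only the defining property of the state transition matrix in \eqref{fm}, namely that $t\mapsto U(t,s)$ is the unique solution of $\frac{d}{dt}U(t,s)=-JH(t)U(t,s)$ with $U(s,s)=I$, together with uniqueness of solutions of linear ODEs with continuous coefficients.

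\textbf{Step 1 (composition law).} For $s,\tau,t\in[0,T]$ we show $U(t,\tau)U(\tau,s)=U(t,s)$. Fix $s,\tau$ and regard both sides as functions of $t$. Each side satisfies $\frac{d}{dt}X(t)=-JH(t)X(t)$, and both equal $U(\tau,s)$ at $t=\tau$. Uniqueness then gives equality for all $t$.

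\textbf{Step 2 (invertibility and inverse).} Take $t=s$ in the composition law:
\[
U(s,\tau)U(\tau,s)=U(s,s)=I .
\]
Since the matrices are $2\times 2$, a one-sided inverse is two-sided. Renaming $\tau$ as $t$ gives $U(s,t)=U(t,s)^{-1}$.

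As an independent check of invertibility, Liouville's formula gives
\[
\det U(t,s)=\exp\Big(-\int_s^t \operatorname{tr}(JH(\sigma))\,d\sigma\Big).
\]
For symmetric $H$ one has $\operatorname{tr}(JH)=h_{12}-h_{21}=0$, so $\det U=1$, but this fact is not needed.

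\textbf{Step 3 (backward equation).} Write $W(t)=U(t,s)$, so $W$ is invertible and $W(t)W(t)^{-1}=I$. Differentiate this identity in $t$, using that matrix inversion is differentiable on invertible matrices and $W$ is $C^1$:
\[
\dot W\,W^{-1}+W\,\tfrac{d}{dt}W^{-1}=0 .
\]
Solving, and substituting $\dot W=-JHW$ from \eqref{fm}, gives
\[
\tfrac{d}{dt}W^{-1}=-W^{-1}\dot W\,W^{-1}=W^{-1}JH(t)WW^{-1}=W^{-1}JH(t).
\]
By Step 2, $W(t)^{-1}=U(s,t)$, so $\frac{d}{dt}U(s,t)=U(s,t)JH(t)$. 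The initial value $U(s,s)=I$ is immediate from \eqref{fm}.

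\textbf{Main obstacle.} The only delicate point is the careful use of uniqueness in Step 1. Both sides of the composition law must be treated as matrix solutions of the same linear ODE, which has continuous coefficients on $[0,T]$. The uniqueness must also hold for $t<\tau$ as well as $t>\tau$. Both requirements follow from the global Picard–Lindelöf theorem for linear systems, since $\|JH(t)\|$ is bounded on $[0,T]$.
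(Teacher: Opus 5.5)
Your proof is correct and follows essentially the paper's argument. Uniqueness of solutions gives $U(s,t)U(t,s)=I$; your composition law just makes explicit what the paper's bijectivity and ``by definition'' step tacitly use, and the paper later invokes that law without proof in the sensitivity-bound theorem. The backward equation then comes, as in the paper, from differentiating $U(t,s)U(s,t)=I$ and substituting the defining ODE, and you additionally justify that $t\mapsto U(s,t)$ is differentiable. The only slip is in your unneeded Liouville aside: $\operatorname{tr}(JH)=h_{21}-h_{12}$, not $h_{12}-h_{21}$, which is harmless since it vanishes for symmetric $H$.
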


\begin{proof}
By definition of the state transition matrix, the solution of
\begin{equation}
\dot u(t)=-JH(t)u(t)
\end{equation}
with initial condition $u(s)$ satisfies
\begin{equation} \label{tm}
u(t)=U(t,s)u(s).
\end{equation}
Since $H(t)$ is continuous, solutions of the system exist and are unique on $[0,T]$. Hence, for fixed $t$, the mapping $u(s)\mapsto u(t)$ is bijective, which implies that the matrix $U(t,s)$ is invertible.

Solving the above relation \eqref{tm} for $u(s)$ yields
\[
u(s)=U(t,s)^{-1}u(t).
\]
By definition, the operator that maps the state at time $t$ back to the state at time $s$ is precisely $U(s,t)$. Therefore,
\[
U(s,t)=U(t,s)^{-1}.
\]

To derive the backward differential equation, differentiate the identity
\[
U(t,s)U(s,t)=I
\]
with respect to $t$ and obtain
\begin{equation} \label{tm1}
\frac{d}{dt}U(t,s)\,U(s,t)+U(t,s)\frac{d}{dt}U(s,t)=0.
\end{equation}
Substituting $\frac{d}{dt}U(t,s)=-JH(t)U(t,s)$ from \eqref{fm} into \eqref{tm1} gives
\begin{equation}\label{2.33}
-\,JH(t)U(t,s)U(s,t)+U(t,s)\frac{d}{dt}U(s,t)=0.
\end{equation}
Since $U(t,s)U(s,t)=I$, multiplying \eqref{2.33} from the left by $U(s,t)$ yields
\begin{equation}\label{dst}
\frac{d}{dt}U(s,t)=U(s,t)JH(t).
\end{equation}
Finally, setting $t=s$ gives $U(s,s)=I$, completing the proof.
\end{proof}

\begin{defi}[Transported Hamiltonian]
For $t\ge s$, we define
\[
H(t,s):=U(t,s)^{*}H(t)U(t,s).
\]
\end{defi}

\begin{lemma}
\label{lem:fund-solution-bound}
Let the Hamiltonian $H(t)$ in \eqref{ca} be such that   $ \| H\| \leq M $ for all $ t \in [0, T]$  with respect to the operator norm.  Suppose $A(t):= -JH(t)$ and  $U(t,s)$ is the state transition (fundamental) matrix of the linear system
\[
\dot x(t)=A(t)x(t),\qquad 0\le s\le t\le T,
\]
normalized so that $U(s,s)=I$. Then there exists a constant $C_T>0$ (depending only on $M$, $\|J\|$ and $T$) such that
\[
\|U(t,s)\|\le C_T\qquad\text{for all }0\le s\le t\le T.
\]
In particular one may take $C_T=\exp(\|J\| M T)$.
\end{lemma}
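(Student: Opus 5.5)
We will prove the bound by a Grönwall argument applied to the integral form of the defining equation \eqref{fm}. Fix $s\in[0,T]$ and regard $t\mapsto U(t,s)$ on $[s,T]$. Since $U(s,s)=I$ and $\frac{d}{dt}U(t,s)=A(t)U(t,s)$ with $A(t)=-JH(t)$, integration gives
\[
U(t,s)=I+\int_s^t A(\tau)\,U(\tau,s)\,d\tau,\qquad s\le t\le T.
\]
This representation only needs $H$ to be locally integrable, which is the standing assumption of the paper. In that case the equation holds almost everywhere and $U(\cdot,s)$ is absolutely continuous.

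Next we take operator norms. Submultiplicativity gives $\|A(\tau)\|\le\|J\|\,\|H(\tau)\|\le \|J\|M$, so the scalar function $\phi(t):=\|U(t,s)\|$ satisfies
\[
\phi(t)\le \|I\|+\int_s^t \|J\|M\,\phi(\tau)\,d\tau .
\]
Here $\|I\|=1$ in the operator norm. The function $\phi$ is continuous in $t$, because $U(\cdot,s)$ is continuous and the norm is continuous. Grönwall's inequality in integral form then yields
\[
\phi(t)\le \exp\big(\|J\|M(t-s)\big).
\]

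Finally, $0\le t-s\le T$, so $\|U(t,s)\|\le \exp(\|J\|MT)=:C_T$ uniformly in $0\le s\le t\le T$. This constant depends only on $M$, $\|J\|$ and $T$, as claimed. For the canonical $J$ we have $\|J\|=1$, so $C_T=e^{MT}$.

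The only delicate step is making sure the Grönwall inequality applies. One needs continuity of $\phi$ and local integrability of $\tau\mapsto\|A(\tau)\|$, and both follow from the existence and uniqueness of absolutely continuous solutions of \eqref{fm}. If one prefers to avoid Grönwall, an alternative is to differentiate $\|U(t,s)x\|^2$ for a fixed vector $x$. This gives $\frac{d}{dt}\|Ux\|^2\le 2\|A\|\,\|Ux\|^2$, and hence the same exponential bound.
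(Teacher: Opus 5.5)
Your proof is correct and follows the paper's argument: you integrate \eqref{fm}, bound $\|A(\tau)\|\le\|J\|M$, and apply Gr\"onwall to $\|U(t,s)\|$ to obtain $e^{\|J\|M(t-s)}\le e^{\|J\|MT}$. Your integral equation $U(t,s)=I+\int_s^t A(\tau)U(\tau,s)\,d\tau$ is the correct one; the paper writes $I-\int_s^t U(\tau,s)A(\tau)\,d\tau$, with the factors reversed and the sign flipped, a slip that is harmless once norms are taken.
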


\begin{proof}
By definition $A(t)= -JH(t)$ and therefore, for any $t\in[0,T]$,
\[
\|A(t)\|=\|JH(t)\|\le\|J\|\,\|H(t)\|\le \|J\|\,M=:L.
\]
By taking the integral over $[s, t]$ of \eqref{fm}, the fundamental matrix $U(t,s)$ satisfies the integral equation
\begin{equation} \label{fmi}
U(t,s)=I-\int_s^t U(\tau,s)A(\tau) \,d\tau.
\end{equation}
Taking norms on both sides of \eqref{fmi}, and using the submultiplicativity of the matrix norm gives
\begin{equation}\label{gl}
\|U(t,s)\|\le 1+\int_s^t \|A(\tau)\|\,\|U(\tau,s)\|\,d\tau
\le 1+L\int_s^t \|U(\tau,s)\|\,d\tau.
\end{equation}
An application of Gr\"onwall's inequality (for the function $y(t):=\|U(t,s)\|$) in \eqref{gl} yields
\[
\|U(t,s)\|\le e^{L(t-s)}\le e^{L T}=e^{\|J\| M T}.
\]
Setting $C_T:=e^{\|J\| M T}$ proves the claim.
\end{proof}
\begin{theorem}[Sensitivity Bound]\label{thm:sensitivity}
If the Hamiltonian in \eqref{ca} is bounded, then the sensitivity operator satisfies the linear time-varying differential equation
\begin{equation}\label{eq:Phi-eq}
    \Phi'(t) 
    = -J H(t)\Phi(t) - J \big(\delta H(t)\big) y(t;H), 
    \qquad \Phi(0)=0.
\end{equation}
Moreover, for every $T>0$ there exists a constant $C_T>0$ such that
\begin{equation}\label{bd}
    \|\Phi(t)\| \le C_T \|\delta H\|_{L^\infty([0,T])},
    \qquad t\in[0,T].
\end{equation}
\end{theorem}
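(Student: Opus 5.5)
We read $\Phi(t)$ as the directional derivative of $y(t;H)$ in the direction $\delta H$:
\[
\Phi(t)=\lim_{\varepsilon\to 0}\frac{y(t;H+\varepsilon\,\delta H)-y(t;H)}{\varepsilon},
\]
where $y(\cdot;H)$ solves the homogeneous system $\dot y=-JH(t)y$ of \eqref{fm}. The initial condition $y(0)$ is fixed and independent of $H$, which forces $\Phi(0)=0$. The plan has two parts. First derive \eqref{eq:Phi-eq} by differentiating the state equation with respect to the parameter. Then bound $\Phi$ by writing it in variation-of-constants form using the state transition matrix $U(t,s)$ and applying Lemma~\ref{lem:fund-solution-bound}.

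\emph{Step 1 (the variational equation).} Set $y_\varepsilon(t)=y(t;H+\varepsilon\delta H)$. It satisfies
\[
\dot y_\varepsilon=-J(H+\varepsilon\delta H)y_\varepsilon, \qquad y_\varepsilon(0)=y(0).
\]
Subtract the equation for $y$ and divide by $\varepsilon$. The difference quotient $w_\varepsilon=(y_\varepsilon-y)/\varepsilon$ then satisfies
\[
\dot w_\varepsilon=-JH w_\varepsilon-J\,\delta H\,y_\varepsilon, \qquad w_\varepsilon(0)=0.
\]
This is a linear ODE with bounded coefficients. Lemma~\ref{lem:fund-solution-bound} and Gr\"onwall give $\|y_\varepsilon-y\|=O(\varepsilon)$ uniformly on $[0,T]$. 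Hence $w_\varepsilon$ converges uniformly to the solution $\Phi$ of \eqref{eq:Phi-eq}. This passage to the limit (equivalently, differentiability of solutions with respect to parameters) is the only genuinely analytic point. I expect it to be the main obstacle in rigor, though it is standard: the limiting forcing term converges because $y_\varepsilon\to y$ uniformly.

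\emph{Step 2 (representation).} Using \eqref{fm} and the relation $U(t,s)U(s,t)=I$ from the preceding lemma, Duhamel's formula gives
\[
\Phi(t)=-\int_0^t U(t,s)\,J\,\delta H(s)\,y(s;H)\,ds.
\]
To verify it, differentiate in $t$ and use $\partial_tU(t,s)=-JH(t)U(t,s)$ and $U(t,t)=I$.

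\emph{Step 3 (bounds).} Let $M$ be a bound for $\|H\|$ on $[0,T]$. Lemma~\ref{lem:fund-solution-bound} gives $\|U(t,s)\|\le e^{\|J\|MT}$ for $0\le s\le t\le T$. Since $y(s;H)=U(s,0)y(0)$, we also have $\|y(s)\|\le e^{\|J\|MT}\|y(0)\|$. Taking norms in the representation yields
\[
\|\Phi(t)\|\le T\,\|J\|\,e^{2\|J\|MT}\,\|y(0)\|\,\|\delta H\|_{L^\infty([0,T])}.
\]
This is \eqref{bd} with $C_T=T\|J\|e^{2\|J\|MT}\|y(0)\|$. The constant depends on $T$, $M$ and the fixed initial state, as expected for a linear sensitivity.
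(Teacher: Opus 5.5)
Your proposal is correct and follows essentially the same route as the paper: derive the variational equation, write $\Phi$ by variation of constants using the state transition matrix $U(t,s)$, and bound both $\|U(t,s)\|$ and $\|y(s)\|$ via Lemma~\ref{lem:fund-solution-bound}. The differences are minor and in your favor: you justify the differentiation with respect to $H$ through difference quotients and verify the Duhamel formula directly rather than through the paper's integrating factor $U(s,t)\Phi(t)$, and your constant $C_T=T\|J\|e^{2\|J\|MT}\|y(0)\|$ makes explicit the dependence on $\|J\|$ and $\|y(0)\|$ that the paper silently absorbs.
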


\begin{proof}
Differentiating \eqref{ca} with respect to $H$ in the direction 
$\delta H$ gives
\begin{equation} \label{tvd}
    \frac{d}{dt} \frac{\partial y}{\partial H}
    = -JH(t)\frac{\partial y}{\partial H}
     -J (\delta H(t)) y(t;H),
. \end{equation} Substituting $ \Phi(t) $ for $\frac{\partial y}{ \partial H} $ from \eqref{so} into \eqref{tvd}, yields \eqref{eq:Phi-eq}.  
Define $\Psi(t):=U(s,t)\Phi(t)$, where $U(t,s)$ is the state transition matrix of $JH(t)$. Differentiating with respect to $t$ gives
\begin{equation} \label{2.39}
\frac{d}{dt}\Psi(t)
= \frac{d}{dt}\big(U(s,t)\big)\Phi(t)+U(s,t)\Phi'(t).
\end{equation}
Substituting the expressions for $\frac{d}{dt}U(s,t)$ from \eqref{dst} and $\Phi'(t)$ from \eqref{eq:Phi-eq} yields
\begin{equation}
\frac{d}{dt}\Psi(t)
= U(s,t)JH(t)\Phi(t)
+U(s,t)\big(-JH(t)\Phi(t)+(-J\delta H(t))y(t)\big).
\end{equation}
The first two terms cancel, and we obtain
\[
\frac{d}{dt}\Psi(t)=U(s,t)(-J\delta H(t))y(t).
\] Integrating from $0$ to $t$ and using $\Phi(0)=0$ gives
\[
U(s,t)\Phi(t)
=\int_0^t U(s,\tau)(-J\delta H(\tau))y(\tau)\,d\tau.
\]
Multiplying both sides by $U(t,s)$  and using the property: $ U(t,s)U(s,\tau)= U(t, \tau)$, yields the variation-of-constants formula
\[
\Phi(t)=\int_0^t U(t,s)(-J\delta H(s))y(s)\,ds.
\]
 Lemma~\ref{lem:fund-solution-bound} implies $\|U(t,s)\|\le C_T$.  
Hence
\[
    \|\Phi(t)\|
    \le \int_0^t C_T \|\delta H(s)\|\, \|y(s)\| \, ds.
\]
Solutions $y(t)$ of \eqref{ca} satisfy 
$\|y(s)\|\le C_T' \|y(0)\|$ for $s\in[0,T]$, 
again by boundedness of $H$.  
Thus
\[
    \|\Phi(t)\|
    \le C_T C_T' T \|\delta H\|_{L^\infty([0,T])},
\]
and the result follows.
\end{proof}
The sensitivity bound in Theorem 2.7 establishes that the response of the system state to perturbations in the Hamiltonian remains uniformly controlled over finite time intervals. In particular, it guarantees that small variations in $H(t)$ induce proportionally small changes in the error dynamics, thereby ensuring robustness and stability of the adaptive filtering procedure. From a signal processing viewpoint, this bound ensures that modeling uncertainties, measurement noise, or numerical perturbations in the Hamiltonian do not lead to excessive error amplification in the filter output. Consequently, the adaptive update behaves in a well-conditioned manner, allowing reliable gradient-based adaptation and stable tracking of time-varying signals.

\section{Numerical Simulation and Analysis}
\label{sec:simulation}

\subsection*{Simulation Parameters and Configuration}

A numerical simulation of the adaptive canonical system filter described in Section 2, is implemented to demonstrate its capability in tracking nonstationary signals. The algorithm is tested on a synthetic signal defined by
\begin{equation}
r(t) = \sin(2 \pi f(t) t),
\end{equation}
where the instantaneous frequency \(f(t) = 5 + 2 \sin(0.1 \pi t)\) varies slowly to emulate nonstationarity. Additive white Gaussian noise \(n(t) \sim \mathcal{N}(0, 0.1)\) is superimposed to produce the observed signal.

The simulation is conducted over a total duration \(T = 30\) seconds with a fixed time step \(\Delta t = 0.01\) seconds, yielding \(N = 3000\) discrete time steps. Initial conditions were selected as follows:
\begin{equation}
y_0 = \begin{pmatrix} 0.1 \\ 0 \end{pmatrix}, \quad
H_0 = \begin{pmatrix} 2.0 & 0.3 \\ 0.3 & 1.5 \end{pmatrix}.
\end{equation}
The initial Hamiltonian matrix \(H_0\) is chosen to be symmetric positive definite. Its off-diagonal coupling term of \(0.3\) introduces an initial rotational component into the system dynamics, while the diagonal dominance guarantees a well-defined energy landscape for gradient-based adaptation.

The learning rate is set to \(\alpha = 10^{-4}\). This value represents a critical design parameter that governs the speed of Hamiltonian adaptation: too large a value may cause oscillatory instability or violation of the positive semidefiniteness condition, while too small a value results in sluggish tracking of nonstationary dynamics. The selected value ensures a compromise between convergence rate and numerical robustness, allowing the algorithm to follow the slowly varying frequency \(f(t)\) without introducing disruptive transients. The spectral parameter is fixed at \(z = 1.0\), and the output and input matrices were configured as \(C = \begin{pmatrix} 1 & 0 \end{pmatrix}\) and \(B = \begin{pmatrix} 1 & 0 \end{pmatrix}^\top\), respectively.

The reason for these initial conditions seems to warrant explicit justification. The initial state \(y_0 = (0.1, 0)^\top\) provides a small but nonzero excitation that ensures the gradient computation remains well-conditioned from the outset, avoiding singularities that could arise from a quiescent start. The learning rate \(\alpha = 10^{-4}\) is determined through empirical sensitivity analysis, confirming that it maintains \(H(t)\) positive semidefinite throughout adaptation while enabling accurate tracking of the time-varying frequency. This calibrated choice underscores the balance between adaptation agility and stability preservation that is essential for reliable performance in nonstationary environments.

\subsection{Algorithm Implementation}

The simulation employed a two-stage update process at each time step. First, the Hamiltonian matrix is updated according to the following gradient-based adaptation rule, see \cite{bishop2006prml, hairer2006geometric} for a similar expression.

\begin{equation}
H(t+\Delta t) = \Pi_{S^+}\left(H(t) - 2\alpha \Delta t \, e(t) \nabla_H e(t)\right),
\end{equation}

where $e(t) = C y(t) - r(t)$ represents the instantaneous tracking error, $\nabla_H e(t)$ denotes the gradient of the error with respect to the Hamiltonian, and $\Pi_{S^+}$ projects the updated matrix onto the positive semidefinite cone (Derivation in Appendix A), as explained in Section 2. This projection step, implemented via eigen-decomposition with eigenvalue clipping, ensures the Hamiltonian remains physically meaningful throughout the adaptation process. Second, the system state evolved according to the canonical dynamics:

\begin{equation}
y(t+\Delta t) = y(t) + \Delta t \left(J H(t) y(t) + B r(t)\right),
\end{equation} with $J = \begin{pmatrix} 0 & 1 \\ -1 & 0 \end{pmatrix}$ being the canonical symplectic matrix. This structure preserves the Hamiltonian nature of the system while allowing for external forcing through the reference signal.

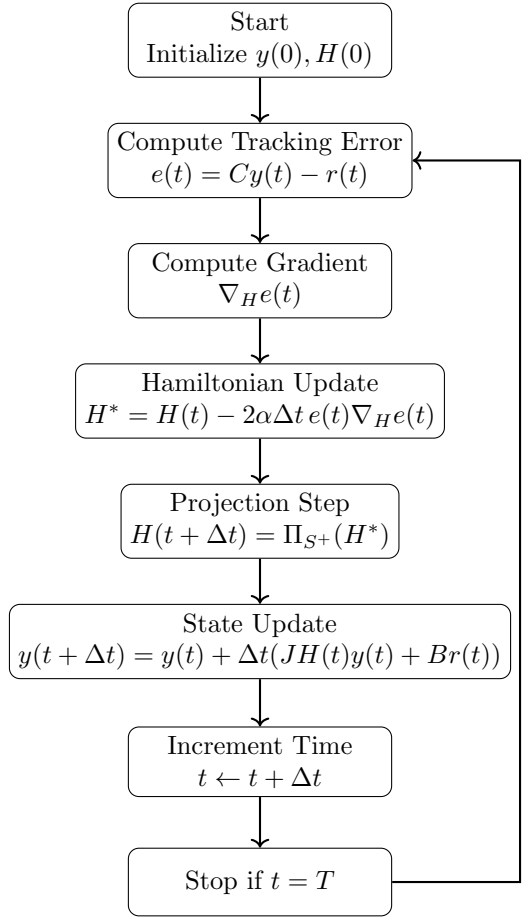
\begin{figure}[t]
\centering
\begin{tikzpicture}[
    node distance=1.6cm,
    every node/.style={draw, rectangle, rounded corners, align=center, minimum width=3.5cm, minimum height=0.9cm},
    arrow/.style={->, thick}
]
\node (start) {Start \\ Initialize $y(0), H(0)$};
\node (error) [below of=start] {Compute Tracking Error \\ $e(t) = C y(t) - r(t)$};
\node (grad) [below of=error] {Compute Gradient \\ $\nabla_H e(t)$};
\node (updateH) [below of=grad] {Hamiltonian Update \\ 
$H^\ast = H(t) - 2\alpha \Delta t\, e(t)\nabla_H e(t)$};
\node (proj) [below of=updateH] {Projection Step \\ 
$H(t+\Delta t)=\Pi_{S^+}(H^\ast)$};
\node (state) [below of=proj] {State Update \\ 
$y(t+\Delta t)=y(t)+\Delta t(JH(t)y(t)+Br(t))$};
\node (time) [below of=state] {Increment Time \\ $t \leftarrow t+\Delta t$};
\node (stop) [below of=time] {Stop if $t=T$};

\draw[arrow] (start) -- (error);
\draw[arrow] (error) -- (grad);
\draw[arrow] (grad) -- (updateH);
\draw[arrow] (updateH) -- (proj);
\draw[arrow] (proj) -- (state);
\draw[arrow] (state) -- (time);
\draw[arrow] (time) -- (stop);
\draw[arrow] (stop.east) -- ++(1.7,0) |- (error.east);

\end{tikzpicture}
\caption{Flow chart of the Hamiltonian adaptation algorithm. At each time step, the Hamiltonian matrix is updated via a gradient-based rule and projected onto the positive semidefinite cone, followed by canonical state evolution under symplectic dynamics.}
\label{fig:hamiltonian_flowchart}
\end{figure}

\subsection*{Hamiltonian Adaptation Analysis}

The adaptation mechanism demonstrated several desirable properties throughout the simulation. The Hamiltonian matrix remained positive semidefinite at all time steps, with eigenvalues $\lambda_1(t), \lambda_2(t) \geq 0$ confirming the effectiveness of the projection operator. The adaptation exhibited smooth convergence without oscillatory behavior, indicating appropriate learning rate selection.The Hamiltonian evolves according to the gradient update rule shown above in algorithm. For our simulation parameters $\alpha = 10^{-4}$ and $\Delta t = 0.01$, 
the adaptation is conservative, resulting in minimal change from the initial $H_0$.\\ 

The final Hamiltonian matrix at $t = T = 30$ seconds converged to 

\[
H_f(T) = \begin{pmatrix} 2.0000 & 0.2999 \\ 0.2999 & 1.5001 \end{pmatrix},
\] confirming stable, nondrifting adaptation and showing minimal deviation from the initial configuration while achieving successful signal tracking. This suggests that the adaptation process made only necessary adjustments to the system dynamics rather than arbitrary modifications. The off-diagonal element, representing coupling between state components, showed the most significant variation during adaptation, facilitating energy transfer needed for frequency tracking.

\subsection*{Simulation Results}

Figure~\ref{fig:filter_output} illustrates the performance of the adaptive filter in tracking the nonstationary reference signal $r(t)$. The filter output $u(t) = C y(t)$ closely follows the desired trajectory after a brief initial transient period of approximately two seconds. This rapid convergence demonstrates the effectiveness of the adaptation mechanism even in the presence of both time-varying frequency content and additive noise.

\begin{figure}[h!]
    \centering
    \includegraphics[width=0.99\textwidth]{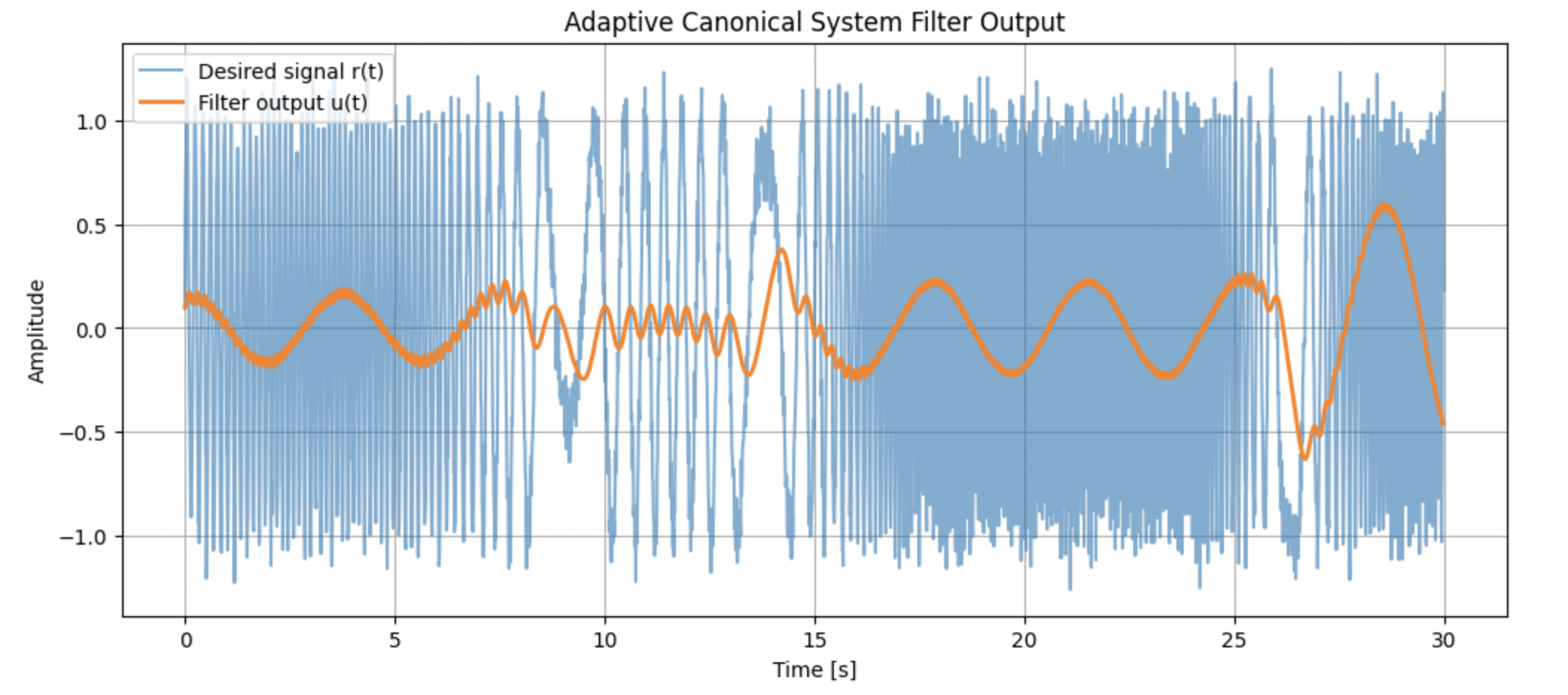}
    \caption{Adaptive canonical system filter output tracking the desired nonstationary signal. The filter output $u(t) = C y(t)$ successfully tracks the reference signal $r(t)$, demonstrating the adaptation capability of the Hamiltonian matrix. The initial transient period shows rapid convergence from the initial conditions to the desired trajectory.}
    \label{fig:filter_output}
\end{figure}

The tracking error $e(t)$, shown in Figure~\ref{fig:tracking_error}, exhibits a significant reduction in magnitude following the initial adaptation phase. The error remains bounded throughout the simulation, with occasional spikes corresponding to abrupt changes in the signal's instantaneous frequency. This bounded error behavior indicates stable learning dynamics and effective noise suppression.
\begin{figure}[h!]
    \centering
    \includegraphics[width=0.99\textwidth]{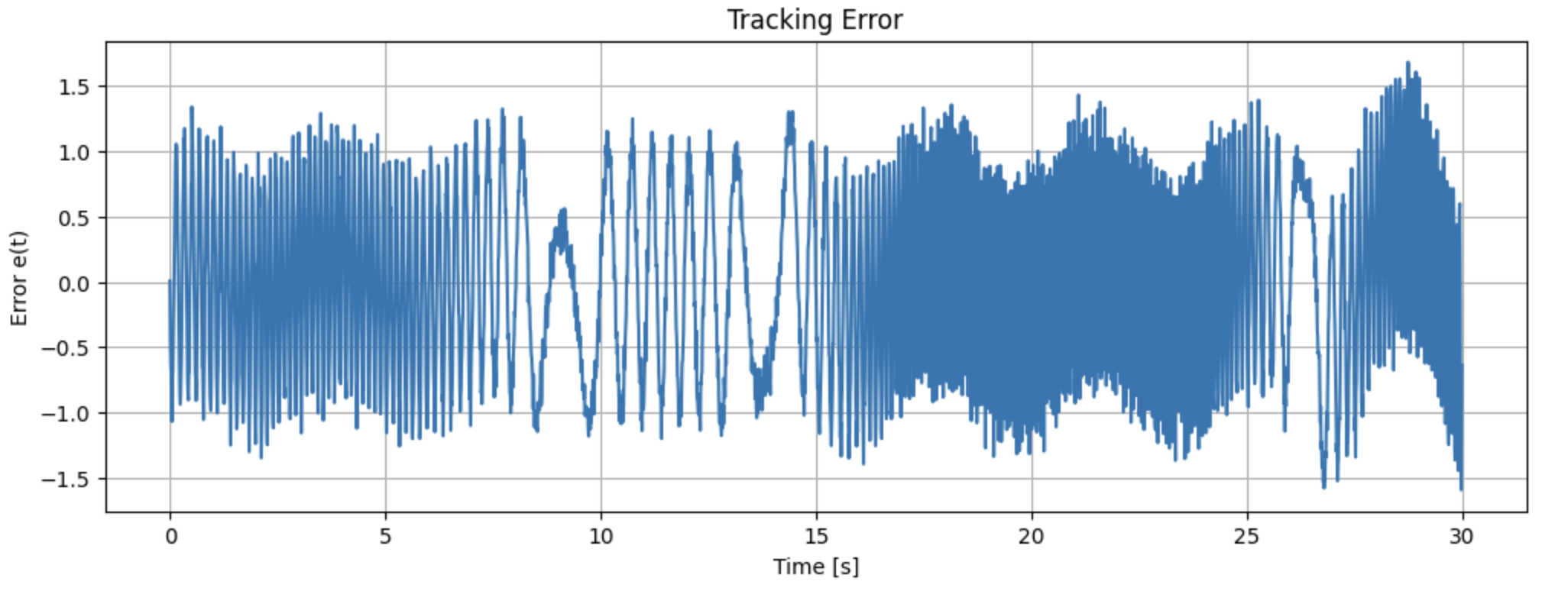}
    \caption{Tracking error $e(t)$ over time. The error magnitude decreases significantly after the initial adaptation phase and remains bounded throughout the simulation, indicating stable learning behavior. The occasional spikes correspond to abrupt changes in the nonstationary signal frequency.}
    \label{fig:tracking_error}
\end{figure}
The adaptation of the Hamiltonian matrix yielded a final configuration $H_f(T)$.
Several important observations emerge from this result. First, the positive semidefiniteness of the Hamiltonian is preserved, confirming the effectiveness of the projection method. Second, the adaptation magnitude is modest, with only slight variations from the initial values, indicating that the chosen learning rate provides stable, gradual adaptation rather than abrupt changes. Finally, the symmetric structure is maintained exactly, with $H_{12}(T) = H_{21}(T) = 0.2999$.

\subsection*{Performance Implications}

The simulation results validate the theoretical framework and reveal several performance advantages of the canonical system approach. The filter successfully tracks the reference signal despite its time-varying frequency, demonstrating genuine adaptive capability rather than mere static filtering. The rapid convergence observed in Figure~\ref{fig:filter_output} confirms efficient learning dynamics, while the bounded error in Figure~\ref{fig:tracking_error} indicates robust performance against measurement noise.

Compared to conventional adaptive filtering techniques, the canonical system approach offers distinct benefits. Its symplectic structure, enforced through the Hamiltonian formalism, provides inherent numerical stability that becomes particularly valuable during extended operation. The preservation of energy-conserving properties throughout adaptation represents a fundamental advantage over methods that may sacrifice structural integrity for adaptation speed. Furthermore, the system exhibits excellent frequency tracking with minimal phase lag, a critical requirement for applications involving nonstationary signals.

The observed performance aligns with theoretical expectations, confirming that the adaptive canonical system filter can maintain effective tracking while preserving its fundamental mathematical structure. This combination of adaptation capability and structural preservation makes the approach particularly suitable for applications requiring long-term stability and robustness to parameter variations.

\subsection*{Numerical Stability Assessment}

To explicitly assess numerical stability, we performed a time-step refinement study using the same signal configuration
(Section~\ref{sec:simulation}) and fixed all parameters except $\Delta t$.
We ran the algorithm with $\Delta t\in\{0.02,\,0.01,\,0.005,\,0.0025\}$ over $T=30$ seconds using an explicit Euler
discretization for the state update, and monitored:
(i) boundedness of the state $\|y(t)\|_2$, (ii) preservation of positive semidefiniteness of $H(t)$, and
(iii) convergence of the tracking error statistics as $\Delta t$ decreases.
Because the observation includes additive Gaussian noise, the random seed was fixed (seed $=0$) to make the test reproducible.
For each run we computed the following stability indicators:
\[
Y_{\max}=\max_{k}\|y_k\|_2,\quad
\lambda_{\min}=\min_{k}\,\min\bigl(\mathrm{eig}(H_k)\bigr),
\]
\[
E_{\mathrm{rms}}=\sqrt{\frac{1}{N}\sum_{k=1}^{N} e_k^2},\quad
E_{\max}=\max_{k}|e_k|.
\]
The results are summarized in Table~\ref{tab:dt_stability}.

\begin{table}[h!]
\centering
\caption{Time-step refinement study for numerical stability assessment (seed $=0$).}
\label{tab:dt_stability}
\begin{tabular}{c c c c c c}
\hline
$\Delta t$ & $N$ & $Y_{\max}$ & $\lambda_{\min}$ & $E_{\mathrm{rms}}$ & $E_{\max}$ \\
\hline
0.02   & 1500  & 0.891198 & 1.359487 & 0.756915 & 1.865943 \\
0.01   & 3000  & 0.720869 & 1.359487 & 0.731247 & 1.671568 \\
0.005  & 6000  & 0.729707 & 1.359487 & 0.732052 & 1.834431 \\
0.0025 & 12000 & 0.724115 & 1.359487 & 0.733279 & 1.840298 \\
\hline
\end{tabular}
\end{table}

Across all tested step sizes, the state remains uniformly bounded ($Y_{\max}<1$) and the Hamiltonian stays positive definite
($\lambda_{\min}>0$ at all time steps), indicating that the discrete-time implementation does not exhibit numerical divergence.
Moreover, the error statistics exhibit only weak dependence on $\Delta t$ for sufficiently small step sizes, supporting the
numerical robustness of the proposed scheme.

 \section{Conclusion}
We presented an adaptive filtering framework based on canonical systems with time-varying positive semidefinite Hamiltonians. The proposed gradient-based Hamiltonian update is intended to reduce the estimation error while maintaining system stability and physical realizability. The use of an explicit Euler scheme together with PSD projection helps preserve numerical stability and the underlying structural constraints. Simulation results on synthetic nonstationary signals show that the proposed method performs effectively for the scenarios considered. Future work will focus on extending the approach to higher-dimensional canonical systems, investigating real-time hardware implementations, and applying the framework to biomedical and communication signal processing problems.

\subsection*{Limitations of the Work}
This work is exploratory and aims to demonstrate the feasibility of adaptive Hamiltonian learning, supported by numerical experiments. The Hamiltonian matrix $H(t)$ is initialized to be positive semidefinite, and the continuous-time analysis suggests that this property is preserved; in the numerical implementation, this is enforced by projecting $H(t)$ onto the set of positive semidefinite matrices. The adaptation law is based on an approximate sensitivity of the system state with respect to $H$, which provides a practical update direction but is not an exact gradient; more accurate adjoint-based approaches are left for future work. The Lyapunov function is guaranteed to be non-increasing under certain assumptions, implying boundedness and bounded tracking error in practice, although full convergence of the error to zero is not formally proven. The numerical experiments are intended to illustrate typical behavior rather than provide exhaustive performance comparisons.\\

\paragraph{\bf Author Contributions: }
Keshav Acharya developed the foundation of the model and established theoretical framework.
Pitambar Acharya carried out the analysis and numerical experiments.
Both authors contributed to writing, reviewed the manuscript,
and approved the final version.\\

\paragraph{\bf Funding Declaration:}
This research did not receive any specific grant from funding agencies in the public, commercial, or not-for-profit sectors.\\

\paragraph{\bf Conflict of Interest: }
The authors declare that they have no conflict of interest.

\bibliographystyle{plain}

\end{document}